\numberwithin{equation}{section}
  \newtheorem{theorem}{Theorem}[section]
  \newtheorem{lemma}[theorem]{Lemma}
  \newtheorem{corollary}[theorem]{Corollary}
\title[Curvature symmetries in lightlike hypersurfaces]{On the curvature symmetries in lightlike hypersurfaces of an indefinite Sasakian manifold}
\author[Samuel Ssekajja]{Samuel Ssekajja}
\newcommand{\acr}{\newline\indent}
\address{ School of Mathematics\acr
 University of the Witwatersrand\acr
 Private Bag 3, Wits 2050\acr
South Africa}
\email{samuel.ssekajja@wits.ac.za} 
\thanks{}
\subjclass[2020]{Primary 53C25; Secondary 53C40, 53C50}
\keywords{Lightlike hypersurfaces, locally symmetric hypersurfaces, semi-symmetric hypersurfaces,  semi-parallel hypersurfaces}
\begin{document}

\begin{abstract}
In this note, we show that a lightlike hypersurface of an indefinite Sasakian manifold, which is tangent to structure vector field is not locally symmetric, semi-symmetric or semi-parallel.

\end{abstract}
\maketitle
%%%%%%%%%%%%%%%%%%%%%%%%
\section{Introduction} 
%%%%%%%%%%%%%%%%%%%%%%%%
The study of curvature symmetries in lightlike hypersurfaces $(M, g)$ of semi-Euclidean spaces $(\bar{M}, \bar{g})$ were introduced and studied by B. Sahin \cite{Sahins}. In that paper, the author defines semi-symmetric hypersurfaces to be those satisfying the condition $R\cdot R=0$, where $R$ is the induced curvature tensor of the hypersurface (see Definition 4). Other symmetries introduced therein are Ricci semi-symmetry, semi-parallel as well as local symmetry, in which the following conditions are assumed $R\cdot \mathrm{Ric}=0$, $\tilde{R}\cdot h=0$ and $\nabla R=0$, respectively. Here, $\mathrm{Ric}$ denotes the induced Ricci tensor of the hypersurface, $\tilde{R}$ the curvature tensor of its Van der Waerden-Bortolotti connection, which is built from its induced connection $\nabla$ and its transversal connection, and $h$ its second fundamental form. The results of the above paper motivated other authors to study the same hypersurfaces in different ambient spaces. For example, see \cite{Sahin, Oscar1, Oscar2, Gupta}. Although it seems natural to extend the study of such hypersurfaces in other spaces such as indefinite almost contact metric manifolds $(\bar{M}, \bar{\phi}, \zeta, \eta, \bar{g})$, like indefinite Sasakain manifolds, the presence of the structure vector field $\zeta$ posses some hidden challenges. In fact, the position of $\zeta$  relative to any hypersurface greatly affects its geometry. For example, we have showed, in this paper, that if a lightlike hypersurface of an indefinite Sasakian manifold is tangent to  $\zeta$, then it can not exhibit any of the above symmetries. These findings can be seen in Theorems \ref{c8}, \ref{e47}, \ref{e471} and \ref{e4711} of this paper. Our results greatly affects a number of prior assumptions in which such hypersurfaces were assumed to exist in indefinite Sasakian space forms, i.e. Sasakian spaces of constant $\bar{\phi}$-holomorphic sectional curvatures, such as in \cite{Oscar1} and \cite{Gupta}. 

The study of lightlike hypersurfaces has attracted the attention of many authors since its introduction by K. L. Duggal and A. Bejancu \cite{Duggal5}. This is due to their numerous applications in mathematical physics and electromagetism. More findings on the study of lightlike hypersurfaces, and submifoolds in general, can be found in \cite{Calin2, Duggal6, Duggal4, Sahin, Jin4, Jin2, Jin3, Ssekajja}, and any other references cited therein. The rest of this paper is arranged as follows: In Sections \ref{pre} and \ref{cons}, we quote some basic information on lightlike hypersurfaces and indefinite Sasakain manifolds used in this paper. Sections \ref{loc}, \ref{semi-sym}, and \ref{semi-par}, we investigate locally symmetric, semi-symmetric and semi-parallel lightlike hypersurfaces of an indefinite Sasakian manifold $(\bar{M}, \bar{\phi}, \zeta, \eta, \bar{g})$, which are tangent to the structure vector field $\zeta$.
%%%%%%%%%%%%%%%%%%%%%%%%%%%%%%%%%%%%%%%%%%%
\section{Preliminaries} \label{pre}
%%%%%%%%%%%%%%%%%%%%%%%%%%%%%%%%%%%%%%%%%%%
An odd-dimensional semi-Riemannian manifold $\bar{M}$ is called an indefinite Sasakian manifold \cite{Takahashi, Tanno} if there exists a $(1, 1)$-tensor field $\bar{\phi}$, a vector field $\zeta$, called structure vector field, a 1-form $\eta$ and an indefinite metric tensor $\bar{g}$ on $\bar{M}$, such that
\begin{align}
\bar{\phi}^{2}X&=-X+\eta(X)\zeta,\label{e1}\\
\eta(\xi)&=1,\label{e2}\\
\eta(\bar{\phi}X)&=0,\label{e8}\\
\bar{g}(X,\xi)&=\eta(X),\label{e3}\\
\bar{g}(\bar{\phi}X,\bar{\phi}Y)&=\bar{g}(X,Y)-\eta(X)\eta(Y),\label{e4}\\
d\eta(X,Y)&=\bar{g}(\bar{\phi}X,Y),\label{e5}\\
(\bar{\nabla}_{X}\bar{\phi})Y&=\eta(Y)X-g(X,Y)\zeta,\label{e6}
\end{align}
where $X$ and $Y$ are any tangent vector fields to $\bar{M}$, and $\bar{\nabla}$ is the semi-Riemannian connection of $\bar{g}$. Replacing $Y$ with $\zeta$ in (\ref{e6}), we get 
\begin{align}\label{e7}
	\bar{\nabla}_{X}\zeta=\bar{\phi}X,
\end{align}
for all $X$ tangent to $\bar{M}$. Let $(\bar{M}, \bar{g})$ be a $(m+2)$-dimensional semi-Riemannian manifold of index $q$, where $0<q<m+2$. Let $(M,g)$ be a hypersurface of $\bar{M}$, where $g$ denotes the metric tensor field induced on $M$ by $\bar{g}$. Then, $M$ is called a lightlike hypersurface of $\bar{M}$ whenever $rank(g)=m$. In this case,  the normal vector bundle $TM^{\perp}$ intersects the tangent
bundle $TM$ of $M$ along a nontrivial differentiable distribution known as the radical distribution of $M$, denoted as $\mathrm{Rad}\, TM$, and given by $\mathrm{Rad}\, TM:p\mapsto \mathrm{Rad}\, T_{p}M=T_{p}M\cap T_{p}M^{\perp}$. Furthermore, it is known \cite{Duggal5, Duggal6} that the complementary bundle to $TM^{\perp}$ in $TM$, called the screen distribution, denoted by $S(TM)$, is non-degenerate and the following decomposition holds: 
 \begin{align}\label{n97}
TM=S(TM)\perp \mathrm{Rad}\, TM,
\end{align}
where $\perp$ denotes the orthogonal direct sum. Moreover, a result due to \cite{Duggal5, Duggal6} shows that there exists a unique vector bundle $\mathrm{tr}(TM)$, called the  lightlike transversal bundle of $M$ with respect to $S(TM)$,  of rank 1 over $M$ such that for any non-zero section $\xi$ of $\mathrm{Rad}\, TM$ on a coordinate neighbourhood $\mathcal{U}\subset M$, there exists a unique section $N$ of $\mathrm{tr}(TM)$ on $\mathcal{U}$ satisfying the conditions 
\begin{align}\label{er1}
	\bar{g}(\xi,N)=1, \quad \bar{g}(N,N)=0 \quad \mbox{and}\quad \bar{g}(N,W)=0,
\end{align}
for any $W$ tangent to $S(TM)$. As a consequence, we have the decomposition  
\begin{align}\label{s15}
	T\bar{M}|_{M}&=S(TM)\perp \{\mathrm{Rad}\, TM\oplus \mathrm{tr}(TM)\}=TM\oplus \mathrm{tr}(TM),
\end{align}
where $\oplus$ denotes a direct sum, not necessarily orthogonal. 

Let $\bar{\nabla}$ be the Levi-Civita connection of $\bar{M}$ and let $P$ be the projection morphism of $TM$ onto $S(TM)$, with respect to (\ref{n97}). Then the local Gauss-Weingarten equations of $M$ and $S(TM)$ are given by (see details in  \cite{Duggal5, Duggal6}):
\begin{align}
 \bar{\nabla}_{X}Y&=\nabla_{X}Y+h(X,Y)=\nabla_{X}Y+B(X,Y)N,\label{s10}\\
  \bar{\nabla}_{X}N&=-A_{N}X+\nabla^{t}_{X}N=-A_{N}X+\tau(X)N, \label{s11}\\
  \nabla_{X}PY&=\nabla^{*}_{X}PY+h^{*t} (X,PY)=\nabla^{*}_{X}PY + C(X,PY)\xi,\label{s12}\\
  \nabla_{X}\xi&=-A^{*}_{\xi}X+\nabla^{*t}_{X}\xi=-A^{*}_{\xi}X -\tau(X) \xi,\label{s13}
 \end{align}
for all $X$ and $Y$ tangent to $M$, $\xi$ tangent to $\mathrm{Rad}\, TM$ and $N$ tangent to $\mathrm{tr}(T M)$.  In the above, $\nabla$ and $\nabla^{*}$ are the induced linear connections on $TM$ and $S(TM)$, respectively. Furthermore, $h$ and $B$ denotes the second fundamental form and local second fundamental form of $M$. $h^{*t}$ and $C$ are the second fundamental form and local second funadamental form of $S(TM)$.  $A_{N}$ and $A^{*}_{\xi}$ are the shape operators of $TM$ and $S(TM)$, respectively. $\nabla^{t}$ and $\nabla^{*t}$ are linear connections on $\mathrm{tr}(TM)$ and $\mathrm{Rad}\, TM$, respectively, while $\tau$ is a smooth 1-form on $TM$. We note that both shape operators $A^{*}_{\xi}$ and $A_{N}$ are screen-valued, and satisfy
\begin{align}\label{er7}
g(A^{*}_{\xi}X,Y)=B(X,Y), \quad g(A_{N}X,PY)= C(X,PY),
\end{align}
for any $X$ and $Y$ tangent to $M$. The induced connection $\nabla^{*}$ is a metric connection, while $\nabla$ is generally not, as it satisfies the relation
\begin{align}\label{s14}
	(\nabla_{X}g)(Y,Z)=B(X,Y)\theta(Z)+B(X,Z)\theta(Y), 
\end{align}
for all $X$, $Y$ and $Z$ tangent to $M$, where $\theta$ is a smooth 1-form $M$, given by 
\begin{align}\label{er6}
\theta(X)=\bar{g}(X,N),
\end{align}
for any $X$ tangent to $M$. Due to the first relation in (\ref{er1}), we notice that the 1-form $\theta$ is nowhere identically vanishing on $M$. In fact, we have 
\begin{align}\label{er2}
\theta(\xi)=\bar{g}(\xi, N)=1.
\end{align}

\section{Basic constructions on lightlike hypersurfaces}\label{cons}
 Consinder a lightlike hypersurface $(M, g)$ of an indefinite Sasakian manifold $(\bar{M}, \bar{\phi},\zeta, \eta, \bar{g})$, which is tangent to the structure vector field $\zeta$. C. Calin \cite{Calin2} has proved that $\zeta$ belongs to the screen distribution $S(TM)$. In this paper, we assume that $M$ is tangent to $\zeta$. From the fact $\bar{g}(\bar{\phi}\xi,\xi)=0$, we notice that the lightlike vector field $\bar{\phi}\xi$ is tangent to $M$. Thus, we consider a screen distribution $S(TM)$ on $M$ containing $\bar{\phi}\mathrm{Rad}\, TM$ as a vector subbundle. Consequently, $N$ is orthogonal to $\bar{\phi}\xi$ and we have $\bar{g}(\bar{\phi}N,\xi)=-\bar{g}(N,\bar{\phi}\xi)=0$. Since $\bar{g}(\bar{\phi}N,N)=0$, it follows that $\bar{\phi}N$ is tangent to $M$ and in particular, it belongs to $S(TM)$. Consequently, $\bar{\phi}\mathrm{tr}(TM)$ is also a vector subbundle of $S(TM)$. Since $\bar{g}(\bar{\phi}\xi,\bar{\phi}N)=1$, it follows that $\bar{\phi}\mathrm{Rad}\,TM \oplus \bar{\phi}\mathrm{tr}(TM)$ is a non-degenerate vector subbbundle of $S(TM)$, with 2-dimensional fibers. Since $\zeta$ is tangent to $M$, and that $\bar{g}(\bar{\phi}\xi,\zeta)=\bar{g}(\bar{\phi}N,\zeta)=0$, then there exists a non-degenerate distribution $D_{0}$ on $TM$ such that 
\begin{align}\label{e40}
	S(TM)=\{\bar{\phi}\mathrm{Rad}\, TM\oplus \bar{\phi}\mathrm{tr}(TM)\}\perp D_{0}\perp \langle\zeta\rangle_{\mathbb{R}},
\end{align}
where $\langle\zeta\rangle_{\mathbb{R}}$ denotes the line bundle spanned by the structure vector field $\zeta$. Furthermore, it is easy to check that $D_{0}$ is an almost complex distribution with respect to $\bar{\phi}$, that is; $\bar{\phi}D_{0}=D_{0}$. Moreover,  from (\ref{n97}) and (\ref{e40}), $TM$ is decomposed as
\begin{align}\label{n101}
TM=\{\bar{\phi}\mathrm{Rad}\, TM\oplus \bar{\phi}\mathrm{tr}(TM)\}\perp D_{0}\perp \langle\zeta\rangle_{\mathbb{R}}\perp TM^{\perp}.
\end{align}
 If we set $D=\mathrm{Rad}\, TM\perp \bar{\phi}\mathrm{Rad}\, TM\perp D_{0}$ and $D'=\bar{\phi}\mathrm{tr}(TM)$, then (\ref{n101}) becomes
\begin{align}\label{p14}
	TM=D\oplus D'\perp \langle\zeta\rangle_{\mathbb{R}}.
\end{align} 
Here, $D$ is an almost complex distribution and $D'$ is carried by $\bar{\phi}$ into the transversal bundle. Consider the lightlike vector fields $U$ and $V$ given by 
\begin{align}\label{f11}
	U=-\bar{\phi}N, \quad V=-\bar{\phi}\xi, 
\end{align}
together with their corresponding $1$-forms $u$ and $v$ given by
\begin{align}\label{p16}
u(X)=g(X, V), \quad v(X)=g(X, U),
\end{align}
for any $X$ tangent to $M$. Then, from (\ref{p14}), any $X$ tangent to $M$ can be written as $X=P_{1}X+P_{2}X+\eta(X)\zeta$, where $P_{1}$ and $P_{2}$ are projection morphisms of $TM$ onto $D$ and $D'$, respectively. It then follows that, for any $X$ tangent to $M$, we have
\begin{align}\label{p17}
	\bar{\phi}X=\phi X+u(X)N,
\end{align}
where $\phi$ is a (1,1)-tensor field defined on $M$ by $\phi X=\bar{\phi}P_{1}X$. Also, the following relations are easy to verify: 
\begin{align}
	\phi^{2}X&=-X+\eta(X)\zeta+u(X)U,\label{p18}\\
	\phi \zeta&=0,\quad \phi U=0, \quad \phi V=\xi,\label{z8}\\
	\eta(\zeta)=1, \quad &\eta(U)=0, \quad \eta(V)=0, \quad \eta(\phi X)=0,\label{z7}\\
	u(\zeta)=0, \quad &u(U)=1,\quad u(V)=0, \quad u(\phi X)=0,\label{e76}\\
	v(\zeta)=0, \quad v&(U)=0, \quad v(V)=1, \quad v(\phi X)=-\theta(X),\label{z5}\\
 \theta(\zeta)=0, \quad& \theta(U)=0, \quad \theta(V)=0, \quad \theta(\phi X)=v(X),\label{e80}\\
	g(\phi X,\phi Y)=g&(X,Y)-\eta(X)\eta(Y)-u(Y)v(X)-u(X)v(Y),\nonumber\\
	g(\phi X,Y)&=-g(X,\phi Y)-u(X)\theta(Y)-u(Y)\theta(X),\label{p30}
\end{align}
for any $X$ and $Y$ tangent to $M$. The following two lemmas are vital to our study:
\begin{lemma}
Let $(M,g)$ be a lightlike hypersurface of an indefinite Sasakian manifold $(\bar{M}, \bar{\phi}, \zeta, \eta, \bar{g})$, which is tangent to the structure vector field $\zeta$. Then, we have
\begin{align}
\nabla_{X}\zeta&=\phi X,\label{e9}\\
B(X,\zeta)&=u(X),\label{e10}\\
A^{*}_{\xi}\zeta&=V,\label{z6}\\
\nabla^{*}_{X}\zeta&=P\phi X,\label{e11}\\
C(X,\zeta)&=v(X),\label{e12}\\
\nabla_{X}U&=\theta(X)\zeta+\phi A_{N}X+\tau(X)U,\label{e13}\\
B(X,U)&=C(X,V),\label{e14}\\
\nabla_{X}V&=\phi A^{*}_{\xi}X-\tau(X)V,\label{e15}\\
(\nabla_{X}\phi)Y&=\eta(Y)X-g(X,Y)\zeta+u(Y)A_{N}X-B(X,Y)U, \label{e16}\\
(\nabla_{X}u)Y&=-B(X,\phi Y)-\tau(X)u(Y),\label{e17}
\end{align}
for any $X$ and $Y$ tangent to $M$, where $\nabla \phi$ and $\nabla u$ are defined as 
\begin{align}
(\nabla_{X}\phi)Y&=\nabla_{X}\phi Y-\phi \nabla_{X}Y,\label{e19}\\
(\nabla_{X}u)Y&=X\cdot u(Y)-u(\nabla_{X}Y).\nonumber
\end{align}
\end{lemma}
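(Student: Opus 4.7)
The plan is to derive each of the ten identities by differentiating one of the distinguished objects $\zeta$, $\xi$, $U=-\bar\phi N$, $V=-\bar\phi\xi$, or $\bar\phi Y$ with the ambient connection $\bar\nabla$, and then splitting the result into its tangential and transversal components using the Gauss--Weingarten formulas (\ref{s10})--(\ref{s13}) and the decomposition $\bar\phi X=\phi X+u(X)N$ from (\ref{p17}). The identities \eqref{e6}, \eqref{e7} for the Sasakian structure and the algebraic relations \eqref{z7}--\eqref{e80} will do essentially all the work.

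First, for (\ref{e9})--(\ref{e10}): apply (\ref{s10}) to $\bar\nabla_{X}\zeta$, equate it to $\bar\phi X=\phi X+u(X)N$ from (\ref{e7}) and (\ref{p17}), and read off the $TM$- and $\mathrm{tr}(TM)$-parts. For (\ref{z6}): take $Y\mapsto g(A^{*}_{\xi}\zeta,Y)=B(\zeta,Y)=u(Y)=g(V,Y)$ by (\ref{e10}) and (\ref{p16}); since $A^{*}_{\xi}$ is screen-valued and $V\in S(TM)$, this forces $A^{*}_{\xi}\zeta=V$. For (\ref{e11})--(\ref{e12}): use (\ref{s12}) to write $\nabla_{X}\zeta=\nabla^{*}_{X}\zeta+C(X,\zeta)\xi$, and decompose $\phi X=P\phi X+\theta(\phi X)\xi=P\phi X+v(X)\xi$ via (\ref{z5}); match components.

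For (\ref{e13})--(\ref{e14}): expand $\bar\nabla_{X}U=-(\bar\nabla_{X}\bar\phi)N-\bar\phi\,\bar\nabla_{X}N$, use (\ref{e6}) (noting $\eta(N)=\theta(\zeta)^{\vee}$-type evaluations yield $\bar g(X,N)=\theta(X)$ and $\bar g(N,\zeta)=0$), then substitute (\ref{s11}) and (\ref{p17}) applied to $A_{N}X$. The tangential part yields (\ref{e13}) and the $N$-coefficient gives $B(X,U)=u(A_{N}X)=g(A_{N}X,V)=C(X,V)$ by (\ref{er7}), which is (\ref{e14}). The identity (\ref{e15}) is obtained analogously from $\bar\nabla_{X}V=-(\bar\nabla_{X}\bar\phi)\xi-\bar\phi\,\bar\nabla_{X}\xi$, noting that $\eta(\xi)=0$ and $\bar g(X,\xi)=0$ for $X\in\Gamma(TM)$ kill the inhomogeneous terms from (\ref{e6}), after which (\ref{s13}) (and the fact that $B(X,\xi)=0$) combined with (\ref{p17}) yields the result.

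Finally, (\ref{e16}) and (\ref{e17}) come together from a single computation: expand both sides of
\[
(\bar\nabla_{X}\bar\phi)Y=\bar\nabla_{X}(\phi Y+u(Y)N)-\bar\phi(\nabla_{X}Y+B(X,Y)N)
\]
using (\ref{s10}), (\ref{s11}), (\ref{p17}), and $\bar\phi N=-U$, then set it equal to $\eta(Y)X-g(X,Y)\zeta$ from (\ref{e6}). The $TM$-component gives (\ref{e16}) after invoking (\ref{e19}), while the $N$-component gives $B(X,\phi Y)+X\!\cdot\!u(Y)+\tau(X)u(Y)-u(\nabla_{X}Y)=0$, which is exactly (\ref{e17}) in view of the definition of $\nabla u$. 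The only real bookkeeping obstacle is keeping the signs straight in the two product-rule expansions involving $\bar\phi$ and the transversal Weingarten term $\tau(X)N$; beyond that the argument is purely mechanical matching of tangential and transversal components.
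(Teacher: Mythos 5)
Your proposal is correct and follows essentially the same route as the paper: split $\bar\nabla_X\zeta$, $\bar\nabla_X U=-\bar\nabla_X\bar\phi N$, $\bar\nabla_X V=-\bar\nabla_X\bar\phi\xi$ and $\bar\nabla_X\bar\phi Y$ via the Gauss--Weingarten formulas (\ref{s10})--(\ref{s13}) and the decomposition (\ref{p17}), compare tangential, screen and transversal components, and get (\ref{z6}) from (\ref{e10}) by nondegeneracy of $S(TM)$. The small side remarks you invoke ($\eta(N)=0$, $\eta(\xi)=0$, $B(X,\xi)=0$ for the radical section $\xi$) are exactly the facts implicitly used in the paper's computation, so nothing is missing.
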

\begin{proof}
Using (\ref{e7}), the last equalilty of (\ref{s10}) and (\ref{p17}), we have 
\begin{align}\label{er3}
\nabla_{X}\zeta+B(X,\zeta)N=\phi X+u(X)N,
\end{align}
for any $X$ tangent to $M$. Comparing components (\ref{er3}), we obtain (\ref{e9}) and (\ref{e10}). Next,  since $S(TM))$ is non-degenerate, the relation $B(X, \zeta)=u(X)$, equivalently; $g(A^{*}_{\xi}\zeta, X)=g(V, X)$, and the fact that $A^{*}_{\xi}$ is screen-valued implies that $A^{*}_{\xi}\zeta=V$, which is relation (\ref{z6}). From (\ref{s12}), (\ref{e80}), (\ref{e9}) and the fact that $X=PX+\theta(X)\xi$, for any $X$ tangent to $M$, we have 
\begin{align}\label{er4}
\nabla^{*}_{X}\zeta+C(X, \zeta)\xi=P\phi X+\theta(\phi X)\xi=P\phi X+v(X)\xi.
\end{align}
Comparing components in (\ref{er4}), we obtain (\ref{e11}) and (\ref{e12}). Next, using (\ref{s10}) and (\ref{f11}), we have 
\begin{align}\label{er5}
\nabla_{X}U&=\bar{\nabla}_{X}U-B(X,U)N\nonumber\\
&=-\bar{\nabla}_{X}\bar{\phi}N-B(X,U)N\nonumber\\
&=-(\bar{\nabla}_{X}\bar{\phi})N-\bar{\phi}\bar{\nabla}_{X}N-B(X,U)N,
\end{align}
for any $X$ tangent to $M$. Then, applying (\ref{e6}), (\ref{s11}), (\ref{er7}), (\ref{er6}), (\ref{p16}) and (\ref{p17}) to (\ref{er5}), we get 
\begin{align}
\nabla_{X}U&=\theta(X)\zeta+\phi A_{N}X+u(A_{N}X)N+\tau(X)U-B(X,U)N\nonumber\\
&=\theta(X)\zeta+\phi A_{N}X+\tau(X)U+\{C(X,V)-B(X,U)\}N.\label{er8}
\end{align}
Comparing components in (\ref{er8}), we obtain (\ref{e13}) and (\ref{e14}). Relation (\ref{e15}) follows in a similar way. Lastly, differentiating (\ref{p17}), and considering (\ref{s10}) and (\ref{s11}), we derive
\begin{align}\label{er11}
\bar{\nabla}_{X}\bar{\phi}Y&=\bar{\nabla}_{X}\phi Y+X\cdot u(Y)N+u(Y)\bar{\nabla}_{X}N\nonumber\\
&=\nabla_{X}\phi Y+B(X,\phi Y)N+X\cdot u(Y)N -u(Y)A_{N}X\nonumber\\
&\quad \quad +u(Y)\tau(X)N,
\end{align}
for any $X$ and $Y$ tangent to $M$. On the other hand, usig (\ref{s10}) (\ref{f11}) and (\ref{p17}), we have 
\begin{align}\label{er10}
\bar{\phi}\bar{\nabla}_{X}Y&=\bar{\phi}\nabla_{X}Y+B(X,Y)\bar{\phi}N\nonumber\\
&=\phi \nabla_{X}Y+u(\nabla_{X}Y)N-B(X,Y)U,
\end{align}
for any $X$ and $Y$ tangent to $M$.
Subtracting (\ref{er10}) from (\ref{er11}), and considerinig (\ref{e6}), we get 
\begin{align}\label{er11}
\eta(Y)X-g(X,Y)\zeta&=(\bar{\nabla}_{X}\bar{\phi})Y=\bar{\nabla}_{X}\bar{\phi}Y-\bar{\phi}\bar{\nabla}_{X}Y\nonumber\\
&=(\nabla_{X}\phi)Y-u(Y)A_{N}X+B(X,Y)U\nonumber\\
&\quad \quad+\{(\nabla_{X}u)Y+B(X,\phi Y)+u(Y)\tau(X)\}N.
\end{align}
Comparing components in (\ref{er11}), we get (\ref{e16}) and (\ref{e17}).
\end{proof}

\begin{lemma}\label{z4}
Let $(M,g)$ be a lightlike hypersurface of an indefinite Sasakian manifold $(\bar{M}, \bar{\phi}, \zeta, \eta, \bar{g})$, which is tangent to the structure vector field $\zeta$. Then, we have
\begin{align}
R(X,Y)\zeta&=\eta(Y)X+u(Y)A_{N}X-\eta(X)Y-u(X)A_{N}Y,\label{e21}\\
R(X,Y)U&=(\nabla_{X}\phi A_{N})Y-(\nabla_{Y}\phi A_{N})X+\theta(Y)\phi X-\theta(X)\phi Y\nonumber\\
&\quad +\tau(Y)\phi A_{N}X-\tau(X)\phi A_{N}Y +2d\tau(X,Y)U\nonumber\\
&\quad +\{2d\theta(X,Y)+\theta(X)\tau(Y)-\theta(Y)\tau(X)\}\zeta\label{e72}\\
R(X,Y)V&=(\nabla_{X}\phi A^{*}_{\xi})Y-(\nabla_{Y}\phi A^{*}_{\xi})X\nonumber\\
&\quad+\tau(X)\phi A^{*}_{\xi}Y-\tau(Y)\phi A^{*}_{\xi}X -2d\tau(X,Y)V,\label{e73}
\end{align}
for any $X$ and $Y$ tangent to $M$, where $\nabla \phi A_{N}$ and $\nabla \phi A^{*}_{\xi}$ are defined as
\begin{align}
(\nabla_{X}\phi A_{N})Y&=\nabla_{X}\phi A_{N}Y-\phi A_{N}\nabla_{X}Y,\label{e74}\\
(\nabla_{X}\phi A^{*}_{\xi})Y&=\nabla_{X}\phi A^{*}_{\xi}Y-\phi A^{*}_{\xi}\nabla_{X}Y.\label{e75}
\end{align}
\end{lemma}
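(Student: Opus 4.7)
The plan is to apply the standard curvature identity
\[
R(X,Y)Z = \nabla_X\nabla_Y Z - \nabla_Y\nabla_X Z - \nabla_{[X,Y]}Z
\]
to each of the distinguished fields $Z=\zeta, U, V$ in turn, and to expand the two-fold covariant derivatives using the first-order formulas of the previous lemma. Throughout, the torsion-freeness of $\nabla$ (inherited from $\bar{\nabla}$ being Levi-Civita) will cause terms of the shape $T(\nabla_X Y - \nabla_Y X - [X,Y])$ to vanish, which is what makes the raw expressions close up tensorially.

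For (\ref{e21}), I start from $\nabla_Y\zeta=\phi Y$ via (\ref{e9}), then apply $\nabla_X$ to obtain $(\nabla_X\phi)Y + \phi\nabla_X Y$, and substitute (\ref{e16}) for $(\nabla_X\phi)Y$. Antisymmetrising in $X,Y$, the $\phi\nabla_X Y-\phi\nabla_Y X$ pieces cancel against $\nabla_{[X,Y]}\zeta = \phi[X,Y]$, while the symmetry of $g$ and of $B$ eliminates the $g(X,Y)\zeta$ and $B(X,Y)U$ contributions, leaving exactly (\ref{e21}).

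For (\ref{e72}), I start from (\ref{e13}), which writes $\nabla_Y U$ as $\theta(Y)\zeta+\phi A_N Y+\tau(Y) U$. Differentiating by $\nabla_X$ through the Leibniz rule produces (i) coefficient derivatives $X(\theta(Y))$ and $X(\tau(Y))$, (ii) reapplications of (\ref{e9}) and (\ref{e13}) via $\theta(Y)\nabla_X\zeta$ and $\tau(Y)\nabla_X U$, and (iii) $\nabla_X(\phi A_N Y)$, which I repackage as $(\nabla_X\phi A_N)Y+\phi A_N\nabla_X Y$ using (\ref{e74}). After antisymmetrising, the scalar coefficient derivatives combine with $\nabla_{[X,Y]}U$ to produce $2d\theta(X,Y)\zeta$ and $2d\tau(X,Y)U$ under the convention $d\omega(X,Y)=\tfrac{1}{2}[X(\omega(Y))-Y(\omega(X))-\omega([X,Y])]$; the $\phi A_N\nabla_X Y$ pieces annihilate the $\phi A_N[X,Y]$ coming from $\nabla_{[X,Y]}U$; and the cross terms $\tau(Y)\theta(X)-\tau(X)\theta(Y)$ survive on the $\zeta$-axis. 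Reassembly gives (\ref{e72}); the derivation of (\ref{e73}) is structurally identical but cleaner, since (\ref{e15}) has no $\zeta$- or $U$-component, so only the $-2d\tau(X,Y)V$ term, the cross pieces $\tau(X)\phi A^{*}_{\xi}Y-\tau(Y)\phi A^{*}_{\xi}X$, and $(\nabla_X\phi A^{*}_{\xi})Y-(\nabla_Y\phi A^{*}_{\xi})X$ via (\ref{e75}) remain.

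The main obstacle across all three identities is purely bookkeeping: tracking signs carefully through two layers of covariant differentiation and correctly recognising the antisymmetrised coefficient derivatives as $2d\theta$ and $2d\tau$ rather than losing them in the noise of Leibniz expansions. No new geometric input beyond the preceding lemma and the torsion-freeness of $\nabla$ is required.
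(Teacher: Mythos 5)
Your proposal is correct and follows essentially the same route as the paper: compute $R(X,Y)Z$ for $Z=\zeta,U,V$ directly from the first-order formulas (\ref{e9}), (\ref{e13}), (\ref{e15}) (with (\ref{e16}) and the symmetry of $g$ and $B$ handling the $\zeta$ case), use torsion-freeness of $\nabla$ to absorb the bracket terms into $(\nabla\phi)$, $(\nabla\phi A_N)$, $(\nabla\phi A^{*}_{\xi})$, and recognise the antisymmetrised scalar derivatives as $2d\theta$ and $2d\tau$. No gaps; the paper's proof is exactly this computation written out.
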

\begin{proof}
By direct calculations while using (\ref{e9}), (\ref{e16}) and (\ref{e19}), we derive
\begin{align}
R(X,Y)\zeta&=\nabla_{X}\nabla_{Y}\zeta-\nabla_{Y}\nabla_{X}\zeta-\nabla_{[X,Y]}\zeta\nonumber\\
&=\nabla_{X}\phi Y-\nabla_{Y}\phi X-\phi [X,Y]\nonumber\\
&=(\nabla_{X}\phi)Y-(\nabla_{Y}\phi)X\nonumber\\
&=\eta(Y)X+u(Y)A_{N}X-\eta(X)Y-u(X)A_{N}Y,\nonumber
\end{align}
for any $X$ and $Y$ tangent to $M$, which proves (\ref{e21}). Next, using (\ref{e13}), we derive 
\begin{align}\label{e70}
\nabla_{X}\nabla_{Y}U&=X\cdot \theta(Y)\zeta+\theta(Y)\phi X+\nabla_{X}\phi A_{N}Y+X\cdot \tau(Y)U\nonumber\\
&\quad +\theta(X)\tau(Y)\zeta+\tau(Y)\phi A_{N}X+\tau(X)\tau(Y)U,
\end{align}
for any $X$ and $Y$ tangent to $M$. Also, by (\ref{e13}), we have 
\begin{align}\label{e71}
\nabla_{[X,Y]}U=\theta([X,Y])\zeta+\phi A_{N}[X,Y]+\tau([X,Y])U.
\end{align}
Replacing (\ref{e70}) and (\ref{e71}) in 
\begin{align*}
R(X,Y)U=\nabla_{X}\nabla_{Y}U-\nabla_{Y}\nabla_{X}U-\nabla_{[X,Y]}U,
\end{align*}
we get (\ref{e72}). Relation (\ref{e73}) can be proved in the same way.
\end{proof}

\begin{corollary}
For any $X$ and $Y$ tangent to $M$, we have
\begin{align}
g((\nabla_{X}\phi A_{N})Y, V)&=-g(\phi A^{*}_{\xi}X, \phi A_{N}Y)+B(X,V)C(Y,U),\label{e81}\\
g((\nabla_{X}\phi A^{*}_{\xi}X)Y, V)&=-g(\phi A^{*}_{\xi}X, \phi A^{*}_{\xi}Y)+B(X,V)B(Y,U).\label{e82}
\end{align}
\end{corollary}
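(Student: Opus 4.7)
Since $(\nabla_X \phi A_N) Y = \nabla_X \phi A_N Y - \phi A_N \nabla_X Y$ by (\ref{e74}), the plan is to pair both terms with $V$ and evaluate each separately. For the second term, I would invoke the skew-type identity (\ref{p30}) together with $\phi V = \xi$ from (\ref{z8}), $\theta(V)=0$ from (\ref{e80}), and $u(V)=0$ from (\ref{e76}), which together yield $g(\phi Z, V) = -g(Z, \xi)$ for every $Z$ tangent to $M$. Since $A_N$ is screen-valued (noted just after (\ref{er7})) and $\xi \in \mathrm{Rad}\,TM$ while $S(TM) \perp \mathrm{Rad}\,TM$, this forces $g(\phi A_N \nabla_X Y, V) = 0$; the same reasoning also kills $g(\phi A_N Y, V)$.

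For the leading term $g(\nabla_X \phi A_N Y, V)$, I would differentiate the vanishing scalar $g(\phi A_N Y, V) \equiv 0$ along $X$ and apply the non-metricity identity (\ref{s14}) to extract
\begin{align*}
0 = g(\nabla_X \phi A_N Y, V) + g(\phi A_N Y, \nabla_X V) + B(X,\phi A_N Y)\,\theta(V) + B(X,V)\,\theta(\phi A_N Y).
\end{align*}
The $\theta(V)$ term drops, and $\theta(\phi A_N Y) = v(A_N Y) = g(A_N Y, U) = C(Y, U)$ by combining (\ref{e80}), (\ref{p16}), and (\ref{er7}). Substituting $\nabla_X V = \phi A^*_\xi X - \tau(X) V$ from (\ref{e15}) into the remaining inner product and killing the $\tau$-contribution via $g(\phi A_N Y, V) = 0$ yields identity (\ref{e81}).

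The second identity (\ref{e82}) is derived by running the same argument with $A^*_\xi$ in place of $A_N$ throughout. The only substantive change occurs at the corresponding step, where one uses $v(A^*_\xi Y) = g(A^*_\xi Y, U) = B(Y, U)$ instead of $C(Y, U)$; this is precisely what produces the $B(X,V)B(Y,U)$ term on the right-hand side.

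The main obstacle is the bookkeeping forced by the non-metric character of $\nabla$: every application of the product rule to $g$ requires the correction terms from (\ref{s14}), and it is important to record upfront that enough of the auxiliary contractions (namely $\theta(V)$, $u(V)$, and $g(\phi A_N Y, V)$) vanish to keep the computation tractable. Beyond this, the derivation is essentially algebraic, leaning only on the relations (\ref{p18})--(\ref{p30}) and the screen-valuedness of both $A^*_\xi$ and $A_N$.
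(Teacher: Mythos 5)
Your overall route is the same as the paper's: expand via (\ref{e74}), kill the term $g(\phi A_{N}\nabla_{X}Y,V)$ (your derivation of $g(\phi Z,V)=-g(Z,\xi)$ from (\ref{p30}), (\ref{z8}), (\ref{e80}), (\ref{e76}) is fine; even more directly, $u(\phi Z)=0$ by (\ref{e76})), then differentiate the vanishing scalar $g(\phi A_{N}Y,V)$ using the non-metricity (\ref{s14}), insert (\ref{e15}), and identify $\theta(\phi A_{N}Y)=v(A_{N}Y)=C(Y,U)$ via (\ref{e80}), (\ref{p16}), (\ref{er7}). Up to that point everything you write is correct and mirrors the paper's computation.

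The problem is the last inference: it does not follow from your own displayed equation. From
\begin{align*}
0 = g(\nabla_{X}\phi A_{N}Y, V) + g(\phi A_{N}Y, \nabla_{X}V) + B(X,\phi A_{N}Y)\,\theta(V) + B(X,V)\,\theta(\phi A_{N}Y),
\end{align*}
substituting $\nabla_{X}V=\phi A^{*}_{\xi}X-\tau(X)V$ and using $g(\phi A_{N}Y,V)=0$, $\theta(V)=0$, you get
\begin{align*}
g((\nabla_{X}\phi A_{N})Y, V) = -g(\phi A^{*}_{\xi}X, \phi A_{N}Y) - B(X,V)C(Y,U),
\end{align*}
i.e.\ the correction term carries a \emph{minus} sign, not the plus sign of (\ref{e81}). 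Your product rule is the one consistent with (\ref{s14}) (equivalently with $X\cdot\bar{g}(\phi A_{N}Y,V)=\bar{g}(\bar{\nabla}_{X}\phi A_{N}Y,V)+\bar{g}(\phi A_{N}Y,\bar{\nabla}_{X}V)$ and $\bar{g}(N,V)=0$), so carried through consistently your method yields the identity with the opposite sign on $B(X,V)C(Y,U)$ (and, by the same token, on $B(X,V)B(Y,U)$ in (\ref{e82})). The paper's proof reaches the $+$ sign by writing $g(\nabla_{X}\phi A_{N}Y,V)=-g(\phi A_{N}Y,\nabla_{X}V)+(\nabla_{X}g)(\phi A_{N}Y,V)$, i.e.\ with the non-metricity correction entering with the opposite sign to yours; the direct ambient computation supports your version. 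So as written your proposal has a genuine gap at the final step: either you silently flipped a sign to force agreement with (\ref{e81}), or what you have actually proved is the corrected identity. You need to flag and resolve this discrepancy explicitly rather than assert that the substitution ``yields (\ref{e81})''.
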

\begin{proof}
Using (\ref{s14}), (\ref{e76}), (\ref{e80}),  (\ref{e15}) and (\ref{e74}), we have 
\begin{align*}
g((\nabla_{X}\phi A_{N})Y, V)&=g(\nabla_{X}\phi A_{N}Y, V)-g(\phi A_{N}\nabla_{X}Y, V)\nonumber\\
&=-g(\phi A_{N}Y, \nabla_{X}V)+(\nabla_{X}g)(\phi A_{N}Y, V)\nonumber\\
&=-g(\phi A_{N}Y, \phi A^{*}_{\xi}X)+B(X,V)\theta(\phi A_{N}Y)\nonumber\\
&=-g(\phi A_{N}Y, \phi A^{*}_{\xi}X)+B(X,V)C(Y,U),
\end{align*}
for any $X$ and $Y$ tangent to $M$, which proves (\ref{e81}). Finally, (\ref{e82}) also follows easily as in (\ref{e81}), which completes the proof.
\end{proof}
\section{Locally symmetric lightlike hypersurfaces}\label{loc}
In this section, we prove that a lightlike hypersurface $(M,g)$ of an indefinite Sasakian manifold $(\bar{M}, \bar{\phi}, \zeta, \eta, \bar{g})$, which is tangent to structure vector field $\zeta$ is not locally symmetric (see Theorems \ref{c8} and \ref{e47}). Let $R$ be the curvature tensor of the induced linear connection $\nabla$. For every $X$, $Y$, $Z$ and $W$ tangent to $M$, the covariant derivative $\nabla R$ of $R$ is given by
\begin{align}\label{e130}
(\nabla_{X}R)(Y,Z)&W=\nabla_{X}R(Y,Z)W\nonumber\\
&-R(\nabla_{X}Y,Z)W-R(Y, \nabla_{X}Z)W-R(Y,Z)\nabla_{X}W.
\end{align}
Then, $M$ is said to be  {\it locally symmetric} \cite{Sahin, Oscar1, Oscar2, Sahins} whenever $\nabla R=0$. Next, our results in this section make use of the following important lemmas:

\begin{lemma}
Let $(M,g)$ be a lightlike hypersurface of an indefinite Sasakian manifold $(\bar{M}, \bar{\phi}, \zeta, \eta, \bar{g})$, which is tangent to the structure vector field $\zeta$. Then, $\nabla R$ satisfies the relation below
\begin{align}
(\nabla_{X}R)(Y,Z)\zeta&=\{u(X)\theta(Z)+g(\phi X,Z)\}Y+u(Z)(\nabla_{X}A_{N})Y\nonumber\\
&\quad -\{B(X,\phi Z)+\tau(X)u(Z)\}A_{N}Y\nonumber\\
&\quad -\{u(X)\theta(Y)+g(\phi X,Y)\}Z-u(Y)(\nabla_{X}A_{N})Z\nonumber\\
&\quad+\{B(X,\phi Y)+\tau(X)u(Y)\}A_{N}Z-R(Y,Z)\phi X,\label{e25}
\end{align}
for any $X$, $Y$ and $Z$ tangent to $M$, where $\nabla A_{N}$ is given by 
\begin{align}\label{e46}
(\nabla_{X}A_{N})Y=\nabla_{X}A_{N}Y-A_{N}\nabla_{X}Y,
\end{align}
for any $X$ and $Y$ tangent to $M$.
\end{lemma}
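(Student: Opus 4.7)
The plan is to begin from the defining formula (\ref{e130}) applied at $W=\zeta$ and substitute the expression (\ref{e21}) for $R(\cdot,\cdot)\zeta$ everywhere it appears. Concretely, the last term of (\ref{e130}) becomes $R(Y,Z)\nabla_{X}\zeta=R(Y,Z)\phi X$ by (\ref{e9}), which already accounts for the $-R(Y,Z)\phi X$ summand in the target identity. For the remaining three terms I would write out $R(Y,Z)\zeta$, $R(\nabla_{X}Y,Z)\zeta$ and $R(Y,\nabla_{X}Z)\zeta$ from (\ref{e21}), apply $\nabla_{X}$ to the first via the Leibniz rule, and then collect.

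Next I would group the resulting sum so that all the terms involving $\nabla_{X}Y$ and $\nabla_{X}Z$ with coefficients $\eta(Z),\eta(Y),u(Z),u(Y)$ cancel pairwise, leaving naturally the derivative expressions $(\nabla_{X}\eta)Z,(\nabla_{X}\eta)Y,(\nabla_{X}u)Z,(\nabla_{X}u)Y$ (multiplying $Y$, $Z$, $A_{N}Y$, $A_{N}Z$ respectively) together with the two tensorial derivatives $(\nabla_{X}A_{N})Y$ and $(\nabla_{X}A_{N})Z$ (multiplied by $u(Z)$ and $u(Y)$) as in (\ref{e46}). At this stage the identity (\ref{e17}) immediately substitutes $(\nabla_{X}u)Z=-B(X,\phi Z)-\tau(X)u(Z)$ and similarly in $Y$, producing the $A_{N}Y$ and $A_{N}Z$ summands of the statement.

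The remaining point, and the main obstacle, is to identify $(\nabla_{X}\eta)Z$ with the explicit expression $u(X)\theta(Z)+g(\phi X,Z)$, because $\nabla$ is not metric. For this I would use the identification $\eta(Z)=g(Z,\zeta)$ (coming from (\ref{e3})) and differentiate:
\begin{align*}
X\cdot \eta(Z)=(\nabla_{X}g)(Z,\zeta)+g(\nabla_{X}Z,\zeta)+g(Z,\nabla_{X}\zeta).
\end{align*}
The first term is evaluated by (\ref{s14}), giving $B(X,Z)\theta(\zeta)+B(X,\zeta)\theta(Z)$; by (\ref{e80}) the first of these vanishes and by (\ref{e10}) the second equals $u(X)\theta(Z)$. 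The third term equals $g(Z,\phi X)$ by (\ref{e9}). Subtracting $\eta(\nabla_{X}Z)=g(\nabla_{X}Z,\zeta)$ yields exactly $(\nabla_{X}\eta)Z=u(X)\theta(Z)+g(\phi X,Z)$, and the analogous formula with $Y$ in place of $Z$ produces the $Y$- and $Z$-coefficients of (\ref{e25}).

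With these three identifications in hand, the assembled expression matches (\ref{e25}) term by term. The only real bookkeeping difficulty is tracking signs carefully when the two derivatives $R(\nabla_{X}Y,Z)\zeta$ and $R(Y,\nabla_{X}Z)\zeta$ are subtracted; apart from this, every ingredient is already provided by the lemmas preceding Section \ref{loc}, so no new computation beyond (\ref{s14}), (\ref{e9}), (\ref{e10}), (\ref{e17}), (\ref{e21}) and (\ref{e80}) is required.
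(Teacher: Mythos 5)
Your proposal is correct and follows essentially the same route as the paper: expand (\ref{e130}) at $W=\zeta$ using (\ref{e21}) and (\ref{e9}), let the $\nabla_X Y$, $\nabla_X Z$ terms cancel so that $(\nabla_X\eta)$, $(\nabla_X u)$ and $(\nabla_X A_N)$ appear, then substitute (\ref{e17}) and the identity $(\nabla_X\eta)Y=u(X)\theta(Y)+g(\phi X,Y)$. Your derivation of this last identity from (\ref{e3}), (\ref{s14}), (\ref{e80}), (\ref{e9}) and (\ref{e10}) correctly supplies the ``direct calculation'' the paper leaves implicit.
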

\begin{proof}
Using the definition of $\nabla R$ in (\ref{e130}), relations (\ref{e9}) and (\ref{e21}), we derive 
\begin{align}
(\nabla_{X}&R)(Y,Z)\zeta\nonumber\\
&=\nabla_{X}R(Y,Z)\zeta-R(\nabla_{X}Y, Z)\zeta-R(Y, \nabla_{X}Z)\zeta-R(X,Y)\nabla_{X}\zeta,\nonumber\\
&=\{(\nabla_{X}\eta)Z\}Y+\{(\nabla_{X}u)Z\}A_{N}Y+u(Z)(\nabla_{X}A_{N})Y-\{(\nabla_{X}\eta)Y\}Z\nonumber\\
&\quad \quad -\{(\nabla_{X}u)Y\}A_{N}Z-u(Y)(\nabla_{X}A_{N})Z-R(Y,Z)\phi X,\label{e36}
\end{align}
for any $X$, $Y$ and $Z$ tangent to $M$. On the other hand, a direct calculation shows 
\begin{align}
(\nabla_{X}\eta)Y&=X\cdot \eta(Y)-\eta(\nabla_{X}Y)\nonumber\\
&=u(X)\theta(Y)+g(\phi X,Y),\label{e35}
\end{align}
for any $X$ and $Y$ tangent to $M$. Replacing (\ref{e17}) and (\ref{e35}) in (\ref{e36}), we obtain (\ref{e25}), which ends the proof.
\end{proof}

\begin{lemma}\label{lemma2}
For every $X$ tangent to $M$, the following holds
\begin{align}\label{vl1}
g((\nabla_{X}A_{N})\zeta, \zeta)=-C(\zeta, P\phi X)+\theta(X).
\end{align}
\end{lemma}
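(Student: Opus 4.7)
The plan is to expand $(\nabla_{X}A_{N})\zeta$ via the definition (\ref{e46}) and then use (\ref{e9}) to rewrite $\nabla_{X}\zeta$ as $\phi X$, obtaining $(\nabla_{X}A_{N})\zeta=\nabla_{X}(A_{N}\zeta)-A_{N}\phi X$. Pairing with $\zeta$ splits the target quantity into two inner products, $g((\nabla_{X}A_{N})\zeta,\zeta)=g(\nabla_{X}(A_{N}\zeta),\zeta)-g(A_{N}\phi X,\zeta)$, which I would evaluate separately.

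For the second, purely algebraic piece, since $\zeta\in S(TM)$ one has $g(A_{N}\phi X,\zeta)=C(\phi X,\zeta)$ by (\ref{er7}). Applying (\ref{e12}) with $\phi X$ in place of $X$ gives $C(\phi X,\zeta)=v(\phi X)$, which equals $-\theta(X)$ by (\ref{z5}). Thus this term contributes $+\theta(X)$ to the final answer.

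For the derivative piece $g(\nabla_{X}(A_{N}\zeta),\zeta)$, I would exploit the non-metricity rule (\ref{s14}) together with the observation that $g(A_{N}\zeta,\zeta)=C(\zeta,\zeta)=v(\zeta)=0$. Expanding $X\cdot g(A_{N}\zeta,\zeta)=0$ yields $g(\nabla_{X}A_{N}\zeta,\zeta)+g(A_{N}\zeta,\nabla_{X}\zeta)+(\nabla_{X}g)(A_{N}\zeta,\zeta)=0$, and the correction term vanishes because $\theta(\zeta)=0$ by (\ref{e80}) while $\theta(A_{N}\zeta)=0$ since $A_{N}\zeta\in S(TM)$ and $\theta=\bar{g}(\cdot,N)$ annihilates $S(TM)$ by (\ref{er1}). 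Using (\ref{e9}) once more, this reduces the derivative piece to $g(\nabla_{X}A_{N}\zeta,\zeta)=-g(A_{N}\zeta,\phi X)$.

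The main obstacle I anticipate is that $\phi X$ need not lie in $S(TM)$ — for instance $\phi V=\xi\in\mathrm{Rad}\,TM$ by (\ref{z8}) — so (\ref{er7}) cannot be applied to $g(A_{N}\zeta,\phi X)$ without first projecting. I would address this by decomposing $\phi X=P\phi X+\theta(\phi X)\xi=P\phi X+v(X)\xi$, with the last step using (\ref{e80}); the $\xi$-component drops out because $A_{N}\zeta\in S(TM)$ is orthogonal to $\mathrm{Rad}\,TM$, leaving $g(A_{N}\zeta,\phi X)=g(A_{N}\zeta,P\phi X)=C(\zeta,P\phi X)$ by (\ref{er7}). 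Combining this with the algebraic piece produces the claimed identity $g((\nabla_{X}A_{N})\zeta,\zeta)=-C(\zeta,P\phi X)+\theta(X)$.
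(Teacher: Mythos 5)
Your proposal is correct and follows essentially the same route as the paper: expand $(\nabla_{X}A_{N})\zeta$, use $\nabla_{X}\zeta=\phi X$ from (\ref{e9}), handle the derivative term via (\ref{s14}) with the vanishing of $g(A_{N}\zeta,\zeta)$, $\theta(\zeta)$ and $\theta(A_{N}\zeta)$, and convert the remaining terms with (\ref{er7}), (\ref{e12}) and (\ref{z5}). Your extra care in projecting $\phi X$ onto $S(TM)$ before applying (\ref{er7}) is exactly the step the paper performs implicitly, so there is nothing to add.
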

\begin{proof}
Using (\ref{er7}), (\ref{s14}), (\ref{z5}) , (\ref{e9}) and (\ref{e12}), we derive 
\begin{align}
g((\nabla_{X}A_{N})\zeta, \zeta)&=g(\nabla_{X}A_{N}\zeta, \zeta)-g(A_{N}\nabla_{X}\zeta, \zeta)\nonumber\\
&=X\cdot v(\zeta)-g(A_{N}\zeta, \nabla_{X}\zeta)-(\nabla_{X}g)(A_{N}\zeta, \zeta)-g(A_{N}\nabla_{X}\zeta, \zeta)\nonumber\\
&=-g(A_{N}\zeta, \phi X)-g(A_{N}\phi X, \zeta)\nonumber\\
&=-C(\zeta, P\phi X)-C(\phi X, \zeta),\nonumber\\
&=-C(\zeta, P\phi X)-v(\phi X)\nonumber\\
&=-C(\zeta, P\phi X)+\theta(X),\nonumber
\end{align}
for any $X$ tangent to $M$. 
\end{proof}

\noindent Now, we prove the following result:
\begin{theorem}\label{c8}
There exist no lightlike hypersurface $(M,g)$ of an indefinite Sasakian manifold $(\bar{M}, \bar{\phi}, \zeta, \eta, \bar{g})$, which is tangent to the structure vector field $\zeta$ and  locally symmetric.
\end{theorem}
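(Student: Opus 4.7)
The plan is to argue by contradiction. Suppose $(M,g)$ is locally symmetric, so that $\nabla R \equiv 0$; then the right-hand side of (\ref{e25}) vanishes identically for all $X,Y,Z$ tangent to $M$. I will specialise this identity until a single surviving non-zero scalar forces $0 = 1$.

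The first step is to set $Y = \zeta$ in (\ref{e25}). Because $\phi\zeta = 0$, $u(\zeta) = 0$, $\theta(\zeta) = 0$, and $g(\phi X,\zeta) = \eta(\phi X) = 0$ by (\ref{z7}), every $Z$-coefficient in the second line of (\ref{e25}) collapses. I then pair the resulting vector identity with $\zeta$ to extract its $\eta$-component, using $\eta(\zeta) = 1$ together with $g(A_N\zeta,\zeta) = C(\zeta,\zeta) = v(\zeta) = 0$ (from (\ref{er7}), (\ref{e12}), (\ref{z5})). What I expect to survive is a scalar identity of the form
\begin{align*}
u(X)\theta(Z) + g(\phi X,Z) + u(Z)\,g((\nabla_X A_N)\zeta,\zeta) - g(R(\zeta,Z)\phi X,\zeta) = 0,
\end{align*}
valid for every $X,Z$ tangent to $M$.

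The second step is to evaluate at the test pair $X = U$ and $Z = \xi$. The key cancellations are: $\phi U = 0$ by (\ref{z8}), so both $g(\phi U,\xi)$ and $R(\zeta,\xi)\phi U$ vanish; and $u(\xi) = g(\xi,V) = 0$, because $V \in \bar\phi\,\mathrm{Rad}\,TM \subset S(TM)$ is orthogonal to $\xi \in \mathrm{Rad}\,TM$. These four vanishings wipe out three of the four summands of the scalar identity, and the only survivor is $u(U)\theta(\xi) = 1\cdot 1 = 1$ (with $u(U) = 1$ from (\ref{e76}) and $\theta(\xi) = 1$ from (\ref{er2})). The identity then asserts $1 = 0$, the required contradiction.

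The only real subtlety is spotting the efficient substitution: natural guesses such as $Y = Z = \zeta$, or $(Y,Z) = (\zeta,U)$, reduce to $0 = 0$, and other choices leave residual curvature or shape-operator terms of indeterminate sign. The triple $(Y,X,Z) = (\zeta,U,\xi)$ is tailored to exploit four independent facts at once: $\phi\zeta = 0$ collapses the $Y$-slot, $\phi U = 0$ kills the curvature term, $u(\xi) = 0$ kills the $(\nabla_X A_N)\zeta$ term, and the dual pairings $u(U) = \theta(\xi) = 1$ prevent the remaining term from vanishing. Observe that Lemma \ref{lemma2} is not actually required here, since $u(\xi) = 0$ already annihilates the term it controls.
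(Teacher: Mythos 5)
Your proof is correct and follows essentially the same route as the paper: both arguments feed the locally symmetric condition into the expression (\ref{e25}) for $(\nabla_{X}R)(Y,Z)\zeta$, specialise to the distinguished fields $U$, $\zeta$, $\xi$, contract with $\zeta$ (using $g(A_{N}\zeta,\zeta)=C(\zeta,\zeta)=v(\zeta)=0$), and reach a contradiction with $\theta(\xi)=1$. The only difference is cosmetic: the paper takes $X=U$, $Z=\zeta$ with $Y$ free and invokes Lemma \ref{lemma2} (which trivialises since $\phi U=0$) to conclude $\theta\equiv 0$, whereas your choice $(X,Y,Z)=(U,\zeta,\xi)$ kills the $(\nabla_{X}A_{N})\zeta$ term via $u(\xi)=0$ and yields $u(U)\theta(\xi)=1=0$ directly, bypassing that lemma.
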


\begin{proof}
Assume $M$ is tangent to $\zeta$ and locally symmetric, i.e. $\nabla R=0$. Then relation (\ref{e25}) reduces to 
\begin{align}
&\{u(X)\theta(Z)+g(\phi X,Z)\}Y+u(Z)(\nabla_{X}A_{N})Y-\{B(X,\phi Z)\nonumber\\
&\quad +\tau(X)u(Z)\}A_{N}Y-\{u(X)\theta(Y)+g(\phi X,Y)\}Z-u(Y)(\nabla_{X}A_{N})Z\nonumber\\
&\quad +\{B(X,\phi Y)+\tau(X)u(Y)\}A_{N}Z-R(Y,Z)\phi X=0,\label{e26}
\end{align}
for any $X$, $Y$ and $Z$ tangent to $M$. Taking $X=U$, $Z=\zeta$ in  (\ref{e26}), and considering (\ref{z8}), we get 
\begin{align}
\theta(Y)\zeta&+u(Y)(\nabla_{U}A_{N})\zeta-\{B(U, \phi Y)+\tau(U)u(Y)\}A_{N}\zeta=0.\label{e27}
\end{align}
Taking the inner product of (\ref{e27}) with $\zeta$, and considering (\ref{e12}) and (\ref{vl1}), we get 
\begin{align*}
\theta(Y)-u(Y)C(\zeta, P\phi U)=0,
\end{align*}
which reduces to $\theta(Y)=0$, for any $Y$ tangent to $M$. This contradicts (\ref{er2}).
\end{proof}

 A lightlike submanifold $(M,g)$ of a semi-Riemannian manifold $(\bar{M}, \bar{g})$ is said to be locally recurrent if its curvature tensor $R$ satisfies the relation
\begin{align}\label{e45}
(\nabla_{X}R)(Y,Z)W=\rho(X)R(Y,Z)W,
\end{align}
for any $X$, $Y$, $Z$ and $W$ tangent to $M$, where $\rho$ is a smooth 1-form on $M$. Furthermore, it is easy to see that $M$ is locally symmtric whenever $\rho=0$.

\begin{theorem}\label{e47}
There exist no lightlike hypersurface $(M,g)$ of an indefinite Sasakian manifold $(\bar{M}, \bar{\phi}, \zeta, \eta, \bar{g})$, which is tangent to the structure vector field $\zeta$ and  locally recurrent.
\end{theorem}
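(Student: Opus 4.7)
The plan is to run the argument of Theorem~\ref{c8} almost verbatim: assume $M$ is locally recurrent with recurrence 1-form $\rho$, substitute a carefully chosen triple of vector fields into the identity $(\nabla_{X}R)(Y,Z)\zeta=\rho(X)R(Y,Z)\zeta$, then pair with $\zeta$ to force $\theta\equiv 0$, contradicting (\ref{er2}). Concretely, I would combine formula (\ref{e25}) for the left-hand side with formula (\ref{e21}) (applied to $R(Y,Z)\zeta$) for the right-hand side.

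Following the proof of Theorem~\ref{c8}, I would then substitute $X=U$ and $Z=\zeta$. The identities $\phi U=0$, $\phi\zeta=0$, $u(\zeta)=0$, $\theta(\zeta)=0$, $u(U)=1$, and $\eta(U)=0$ collapse the left-hand side to $-\theta(Y)\zeta-u(Y)(\nabla_{U}A_{N})\zeta+\{B(U,\phi Y)+\tau(U)u(Y)\}A_{N}\zeta$, while the right-hand side becomes $\rho(U)\{Y-\eta(Y)\zeta-u(Y)A_{N}\zeta\}$. This is the key specialization: the two choices $X=U$ and $Z=\zeta$ are designed precisely to kill the $R(Y,Z)\phi X$ term and reduce the $\nabla A_N$ contributions on the left.

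The final step is to take the $g$-inner product with $\zeta$. The right-hand side vanishes identically, since $g(Y,\zeta)=\eta(Y)$ and $g(A_{N}\zeta,\zeta)=C(\zeta,\zeta)=v(\zeta)=0$ by (\ref{er7}) and (\ref{e12}). On the left, the same computation annihilates the $A_{N}\zeta$ term, and Lemma~\ref{lemma2} applied at $X=U$ (with $\phi U=0$ and $\theta(U)=0$) gives $g((\nabla_{U}A_{N})\zeta,\zeta)=0$. What remains is $\theta(Y)=0$ for every $Y$ tangent to $M$, contradicting (\ref{er2}). The only non-routine observation is that the recurrence factor $\rho(U)$ is wiped out by the $\zeta$-pairing because $R(Y,\zeta)\zeta$ lies in $\{\zeta\}^{\perp}$; once this is noted, the argument of Theorem~\ref{c8} transplants to the recurrent setting with no essential change, which is why the statement is phrased as a strict strengthening of Theorem~\ref{c8}.
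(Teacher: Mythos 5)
Your proposal is correct and follows essentially the same route as the paper: combine (\ref{e25}) with (\ref{e21}) under the recurrence hypothesis (\ref{e45}), set $X=U$, $Z=\zeta$, and pair with $\zeta$, using $C(\zeta,\zeta)=v(\zeta)=0$ and Lemma \ref{lemma2} (with $\phi U=0$, $\theta(U)=0$) so that both the $\rho(U)$-terms and the $A_{N}\zeta$-terms drop out, leaving $\theta\equiv 0$ against (\ref{er2}). No gaps; your observation that $g(R(Y,\zeta)\zeta,\zeta)=0$ is exactly why the recurrence term disappears, just as in the paper's computation.
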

\begin{proof}
Using (\ref{e21}), (\ref{e25}) and (\ref{e45}), we have 
\begin{align}
\{u&(X)\theta(Z)+g(\phi X,Z)\}Y+u(Z)(\nabla_{X}A_{N})Y-\{B(X,\phi Z)\nonumber\\
&+\tau(X)u(Z)\}A_{N}Y-\{u(X)\theta(Y)+g(\phi X,Y)\}Z-u(Y)(\nabla_{X}A_{N})Z\nonumber\\
&+\{B(X,\phi Y)+\tau(X)u(Y)\}A_{N}Z-R(Y,Z)\phi X\nonumber\\
&=\rho(X)\{\eta(Z)Y+u(Z)A_{N}Y-\eta(Y)Z-u(Y)A_{N}Z\},\label{e50}
\end{align}
for any $X$, $Y$ and $Z$ tangent to $M$. With $X	=U$ and $Z=\zeta$ in (\ref{e50}), we get 
\begin{align}
-\theta(Y)\zeta&-u(Y)(\nabla_{U}A_{N})\zeta+\{B(U, \phi Y)+\tau(U)u(Y)\}A_{N}\zeta\nonumber\\
&=\rho(U)\{Y-\eta(Y)\zeta-u(Y)A_{N}\zeta\},\label{vl2}
\end{align}
for any $Y$ tangent to $M$. Taking the inner product of (\ref{vl2}) with $\zeta$, and  considering (\ref{e12}) and (\ref{vl1}), we get $\theta=0$, which contradicts (\ref{er2}).
\end{proof}

\section{Semi-symmetric lightlike hypersurfaces}\label{semi-sym}
 Let $(M,g)$ be a lightlike hypersurface of a semi-Riemannian manifold $(\bar{M}, \bar{g})$. Then, $M$ is said to be {\it semi-symmetric} \cite{Sahins} if its curvature tensor $R$ satisfies the condition $R\cdot R=0$, that is; for any $X, Y, X_{1}, X_{2}, X_{3}$ and $X_{4}$ tangent to $M$,  
\begin{align}
(R(X,Y)\cdot R)&(X_{1}, X_{2}, X_{3}, X_{4})=-R(R(X,Y)X_{1}, X_{2}, X_{3}, X_{4})\nonumber\\
&-R(X_{1}, R(X,Y)X_{2}, X_{3}, X_{4})-R(X_{1}, X_{2}, R(X,Y)X_{3}, X_{4})\nonumber\\
&\quad \quad -R(X_{1}, X_{2}, X_{3}, R(X,Y)X_{4})=0.\label{z1}
\end{align}
Our next result will be to show that a lightlike hypersurface of an indefinite Sasakian manifold $(\bar{M}, \bar{\phi}, \zeta, \eta, \bar{g})$, which is tangent to the structure vector field $\zeta$ is never semi-symmetric. In that line, we will need the following important lemma. 
\begin{lemma}
The following curvature relatiuons holds
\begin{align}
R(U,\zeta)\zeta&=U-A_{N}\zeta,\quad R(V, \zeta)\zeta=V,\label{z2}\\
g(R(U,V)\zeta, &\zeta)=-1,\quad g(R(U,\zeta)V,\zeta)=-1. \label{z3}
\end{align}
\end{lemma}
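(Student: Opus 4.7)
The four identities split into two groups: those involving $R(\,\cdot\,,\,\cdot\,)\zeta$, which fall out of the closed-form expression (\ref{e21}), and the last one, $g(R(U,\zeta)V,\zeta)=-1$, which requires unfolding $R(U,\zeta)V$ from the definition of the curvature tensor.

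\emph{The three identities coming from $R(\,\cdot\,,\,\cdot\,)\zeta$.} I specialise (\ref{e21}) at $(X,Y)=(U,\zeta),\,(V,\zeta),\,(U,V)$ and use the scalar values $\eta(\zeta)=1$, $\eta(U)=\eta(V)=u(\zeta)=u(V)=0$, $u(U)=1$ recorded in (\ref{z7})--(\ref{e76}). These immediately collapse (\ref{e21}) to $U-A_{N}\zeta$, $V$, and $-A_{N}V$, respectively, yielding (\ref{z2}) and $R(U,V)\zeta=-A_{N}V$. Pairing the last one with $\zeta$ and applying (\ref{er7}), (\ref{e12}), and (\ref{z5}) gives
\[
g(R(U,V)\zeta,\zeta)=-g(A_{N}V,\zeta)=-C(V,\zeta)=-v(V)=-1,
\]
which is the first half of (\ref{z3}).

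\emph{The identity $g(R(U,\zeta)V,\zeta)=-1$.} I expand directly,
\[
R(U,\zeta)V=\nabla_{U}\nabla_{\zeta}V-\nabla_{\zeta}\nabla_{U}V-\nabla_{[U,\zeta]}V.
\]
By (\ref{e15}), $\nabla_{X}V=\phi A^{*}_{\xi}X-\tau(X)V$, so using $A^{*}_{\xi}\zeta=V$ from (\ref{z6}) and $\phi V=\xi$ from (\ref{z8}) one gets $\nabla_{\zeta}V=\xi-\tau(\zeta)V$ and $\nabla_{U}V=\phi A^{*}_{\xi}U-\tau(U)V$. Since (\ref{e9}) together with $\phi U=0$ gives $\nabla_{U}\zeta=0$, the torsion-freeness of $\nabla$ yields $[U,\zeta]=-\nabla_{\zeta}U$, which (\ref{e13}) and $\theta(\zeta)=0$ expand to $-\phi A_{N}\zeta-\tau(\zeta)U$. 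I then pair each of the three summands with $\zeta$. The structural orthogonalities $g(\phi X,\zeta)=0$ (from (\ref{z7})), $g(V,\zeta)=0$ (from the decomposition (\ref{e40})), $g(\xi,\zeta)=0$ (since $\xi\in\mathrm{Rad}\,TM$ and $\zeta\in S(TM)$ are orthogonal by (\ref{n97})), and $\nabla_{\zeta}\zeta=\phi\zeta=0$ annihilate every contribution except the single term $g(\nabla_{U}\xi,\zeta)$, which by (\ref{s13}), (\ref{er7}), and (\ref{e10}) equals $-g(A^{*}_{\xi}U,\zeta)=-B(U,\zeta)=-u(U)=-1$.

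\emph{The main obstacle.} The one subtlety is evaluating the term $g(\nabla_{\zeta}(\phi A^{*}_{\xi}U),\zeta)$ that appears inside $g(\nabla_{\zeta}\nabla_{U}V,\zeta)$; because $\nabla$ is not metric, one cannot simply move the covariant derivative past $g(\cdot,\cdot)$. The resolution is to differentiate the identically vanishing function $g(\phi A^{*}_{\xi}U,\zeta)\equiv 0$ along $\zeta$, which (with $\nabla_{\zeta}\zeta=0$) reduces the claim to showing $(\nabla_{\zeta}g)(\phi A^{*}_{\xi}U,\zeta)=0$; by (\ref{s14}) this correction equals $B(\zeta,\phi A^{*}_{\xi}U)\theta(\zeta)+B(\zeta,\zeta)\theta(\phi A^{*}_{\xi}U)$, and both summands vanish because $\theta(\zeta)=0$ by (\ref{e80}) and $B(\zeta,\zeta)=g(V,\zeta)=0$. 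The rest of the argument is routine bookkeeping, each discarded term vanishing for one of the orthogonality reasons already listed.
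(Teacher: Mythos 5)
Your proposal is correct and follows essentially the same route as the paper: the first three identities by specializing (\ref{e21}), and the last one by unwinding $R(U,\zeta)V$ through (\ref{e15}), (\ref{z6}), (\ref{z8}), with the non-metricity of $\nabla$ handled via (\ref{s14}) and the key surviving term $g(\nabla_{U}\xi,\zeta)=-B(U,\zeta)=-u(U)=-1$. The only cosmetic difference is that you expand the curvature definition directly instead of quoting the intermediate formula (\ref{e73}), which is the same computation.
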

\begin{proof}
The two relations in (\ref{z2}) follows directly from (\ref{e21}) of Lemma \ref{z4}. Again, by (\ref{e21}, together with (\ref{e76}), (\ref{z5}), and (\ref{e12}), we have 
\begin{align*}
g(R(U,V)\zeta, \zeta)=-u(U)g(A_{N}V, \zeta)=-C(V, \zeta)=-v(V)=-1,
\end{align*}
which proves the first relation of (\ref{z3}). Futhermore, using (\ref{s13}), (\ref{s14}), (\ref{z8}), (\ref{z7}), (\ref{e80}), (\ref{e10}), (\ref{z6}), (\ref{e73}) and (\ref{e82}), we derive
\begin{align*}
g(R(U,\zeta)V,\zeta)&=g((\nabla_{U}\phi A^{*}_{\xi})\zeta, \zeta)-g((\nabla_{\zeta}\phi A^{*}_{\xi})U, \zeta)\\
&=g(\nabla_{U}\phi A^{*}_{\xi}\zeta, \zeta)-g(\nabla_{\zeta}\phi A^{*}_{\xi}U,\zeta)\\
&=g(\nabla_{U}\phi V, \zeta)+g(\phi A^{*}_{\xi}U,\nabla_{\zeta}\zeta)+(\nabla_{\zeta}g)(\phi A^{*}_{\xi}U, \zeta)\\
&=g(\nabla_{U}\xi, \zeta)+B(\zeta, \zeta)\theta(\phi A^{*}_{\xi}U)\\
&=-g(A^{*}_{\xi}U, \zeta)+u(\zeta)B(U,U)\\
&=-B(U,\zeta)\\
&=-u(U)\\
&=-1,
\end{align*}
which proves the second relation in (\ref{z3}).
\end{proof}

\noindent Next, we prove the following result:

\begin{theorem}\label{e471}
	There exist no lightlike hypersurface $(M,g)$ of an indefinite Sasakian manifold $(\bar{M}, \bar{\phi}, \zeta, \eta, \bar{g})$, which is tangent to the structure vector field $\zeta$ and  semi-symmetric. 
\end{theorem}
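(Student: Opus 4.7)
The plan is to assume for contradiction that $M$ is semi-symmetric, so that $R\cdot R=0$, and then to specialize the defining identity (\ref{z1}) to arguments drawn from the distinguished family $\{U,V,\zeta\}$, mirroring the strategy used in Theorems \ref{c8} and \ref{e47}. The crucial ingredients here are the two scalar evaluations supplied by (\ref{z3}), namely $g(R(U,V)\zeta,\zeta)=-1$ and $g(R(U,\zeta)V,\zeta)=-1$; these depend neither on the shape operators $A_{N}$, $A^{*}_{\xi}$ nor on the connection form $\tau$, so they cannot be cancelled by any choice of hypersurface data and serve as the structural obstruction to semi-symmetry.

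The aim is to locate a substitution $(X,Y;X_{1},X_{2},X_{3},X_{4})\in\{U,V,\zeta\}^{6}$ for which, in the four-term expansion of $(R(X,Y)\cdot R)(X_{1},X_{2},X_{3},X_{4})=0$, three of the terms vanish identically while the fourth isolates a nonzero multiple of one of the constants from (\ref{z3}), producing a numerical identity $-1=0$. The vanishing of three terms should follow by applying (\ref{e21}) to each inner factor $R(X,Y)X_{i}$ and noting that the resulting contributions are multiplied by one of $\eta(U)$, $\eta(V)$, $u(\zeta)$, or $u(V)$, all of which vanish by (\ref{z7})--(\ref{e76}). The surviving term is then simplified via $R(U,\zeta)\zeta=U-A_{N}\zeta$ and $R(V,\zeta)\zeta=V$ from (\ref{z2}), so that one of the constants from (\ref{z3}) appears as a coefficient.

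Concretely, the first candidates I would try are substitutions such as $(R(U,\zeta)\cdot R)(U,V,\zeta,\zeta)=0$ or $(R(V,\zeta)\cdot R)(U,V,\zeta,\zeta)=0$; each carries three $\zeta$'s among the arguments, which maximises the use of (\ref{z2}) and (\ref{e21}), and the outer tensor $R(U,V,\cdot,\cdot)$ already incorporates the nonzero constant $g(R(U,V)\zeta,\zeta)=-1$ that one wants to isolate. If neither substitution works out cleanly, I would sweep through the small finite list of substitutions in $\{U,V,\zeta\}^{6}$, organised by how many $\zeta$'s occur among $X_{1},X_{2},X_{3},X_{4}$, until the correct one is identified. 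An alternative route, closer in spirit to Theorems \ref{c8} and \ref{e47}, is to arrange the substitution so that one ends up with $\theta=0$, contradicting (\ref{er2}); this would involve invoking Lemma \ref{lemma2} inside the expansion.

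The main obstacle will be handling the shape-operator-dependent remainders introduced by $R(U,\zeta)\zeta=U-A_{N}\zeta$: any residual $A_{N}\zeta$- or $A_{N}V$-contribution must either vanish because it is contracted against a one-form that kills the screen distribution (i.e.\ via the identities of (\ref{z7})--(\ref{e80})), or combine with the other three terms so that only a constant from (\ref{z3}) survives. This cancellation is the bookkeeping heart of the proof, analogous to the role Lemma \ref{lemma2} played in Theorems \ref{c8} and \ref{e47}, and once it is carried out the contradiction is immediate.
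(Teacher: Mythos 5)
Your overall strategy is indeed the paper's: assume $R\cdot R=0$, specialize (\ref{z1}) to arguments drawn from $\{U,V,\zeta\}$, and squeeze a numerical contradiction out of the two constants in (\ref{z3}). But as written there is a genuine gap: the decisive computation is never carried out, and the two substitutions you actually name would not close with the toolkit you cite. Expanding $(R(V,\zeta)\cdot R)(U,V,\zeta,\zeta)=0$ via (\ref{z1}) and (\ref{e21}) leaves terms such as $u(R(V,\zeta)U)$, $v(R(V,\zeta)V)=g(R(V,\zeta)V,U)$, $g(R(U,V)V,\zeta)$ and $g(R(U,V)\zeta,V)=-g(A_{N}V,V)$, none of which is determined by (\ref{z2}), (\ref{z3}) or (\ref{e21}); since the induced curvature of a lightlike hypersurface does not enjoy the usual metric symmetries, you cannot trade these for the known constants. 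The candidate $(R(U,\zeta)\cdot R)(U,V,\zeta,\zeta)=0$ fails for the same reason (it produces $g(R(U,\zeta)V,U)$, $g(R(U,V)(U-A_{N}\zeta),\zeta)$, etc.). Falling back on ``sweep through $\{U,V,\zeta\}^{6}$ until something works'' leaves the key step unverified, and your guiding criterion --- three terms vanish and one survives, giving $-1=0$ --- is not in fact how the successful substitution behaves, so the sweep as you describe it might never be declared successful.

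The substitution that works, and the one the paper uses, is $X=V$, $Y=\zeta$, $X_{1}=U$, $X_{2}=X_{3}=X_{4}=\zeta$, i.e. $(R(V,\zeta)\cdot R)(U,\zeta,\zeta,\zeta)=0$. There two terms vanish and two survive: $g(R(R(V,\zeta)U,\zeta)\zeta,\zeta)=0$ by (\ref{e21}), because the $A_{N}\zeta$-remainder dies against $\zeta$ via $g(A_{N}\zeta,\zeta)=C(\zeta,\zeta)=v(\zeta)=0$ from (\ref{e12}); and $g(R(U,\zeta)\zeta,V)=g(U-A_{N}\zeta,V)=u(U)-C(\zeta,V)=1-B(U,\zeta)=0$ by (\ref{z2}), (\ref{e14}) and (\ref{e10}). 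Since $R(V,\zeta)\zeta=V$, the remaining two terms are exactly $g(R(U,V)\zeta,\zeta)$ and $g(R(U,\zeta)V,\zeta)$, each equal to $-1$ by (\ref{z3}), so the identity collapses to $-1-1=0$, the contradiction. Your lemma inventory is the right one; what is missing is pinning down this particular substitution and doing the four-term bookkeeping, which is precisely the content of the paper's proof.
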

\begin{proof}
Suppose that $M$ is tangent to $\zeta$ and semi-symmetric. Then, taking $X=V$, $X_{1}=U$ and $Y=X_{2}=X_{3}=X_{4}=\zeta$ in (\ref{z1}), we obtain 
\begin{align}\label{z10}
R(R(V,\zeta)U,\zeta,\zeta,\zeta)&+R(U,R(V,\zeta)\zeta,\zeta,\zeta)\nonumber\\
&+R(U,\zeta,R(V,\zeta)\zeta,\zeta)+R(U,\zeta,\zeta,R(V,\zeta)\zeta)=0.
\end{align}
But, using (\ref{e12}) and (\ref{e21}), the first term of (\ref{z10}) simplies as 
\begin{align}\label{z11}
R(R(V,\zeta)U,\zeta,\zeta,\zeta)&=g(R(R(V,\zeta)U, \zeta)\zeta, \zeta)\nonumber\\
&=g(R(V,\zeta)U-\eta(R(V,\zeta)U)\zeta, \zeta)=0.
\end{align}
Then applying (\ref{z2}) and (\ref{z11}) to (\ref{z10}), we get 
\begin{align}\label{z12}
g(R(U,V)\zeta,\zeta)+g(R(U,\zeta)V,\zeta)+g(R(U,\zeta)\zeta, V)=0.
\end{align}
The last term of (\ref{z12}) vanishes according to (\ref{e10}), (\ref{e14}) and the first relation in (\ref{z2}); that is
\begin{align*}
g(R(U,\zeta)\zeta, V)=g(U-A_{N}\zeta, V)=g(U,V)-C(\zeta, V)=1-B(U,\zeta)=0.
\end{align*}
Thus, (\ref{z12}) reduces to 
\begin{align*}
g(R(U,V)\zeta,\zeta)+g(R(U,\zeta)V,\zeta)=0,
\end{align*}
which leads to the obvious contradiction  $-1-1=0$ when the two relations in (\ref{z3}) are substituted into it, which proves our result.
\end{proof}

\section{Semi-parallel lightlike hypersurfaces}\label{semi-par}
 Let $(M,g)$ be a lightlike submanifold of a semi-Riemannian manifold $(\bar{M}, \bar{g})$.  Let $\tilde{R}$ denote the curvature tensor of the Van der Waerden-Bortolotti connection $\tilde{\nabla}:=\nabla\oplus \nabla^{t}$, where $\nabla$ and $\nabla^{t}$ denotes the induced connections in $TM$ and the transversal bundle $\mathrm{tr}(TM)$ by $\bar{\nabla}$, respectively. Then, it is easy to see that 
\begin{align}\label{e90}
(\tilde{R}(X,Y)\cdot h)(Z,W)&=R^{t}(X,Y)h(Z,W)\nonumber\\
&\quad -h(R(X,Y)Z,W)-h(Z, R(X,Y)W),
\end{align}
for any $X$, $Y$, $Z$ and $W$ tangent to $M$. Here, $h$ denotes the second fundamental form of $M$ and $R^{t}$ the transversal curvature tensor of $M$, defined as 
\begin{align}\label{e91}
R^{t}(X,Y)V&=\nabla^{t}_{X}\nabla^{t}_{Y}V-\nabla^{t}_{Y}\nabla^{t}_{X}V-\nabla^{t}_{[X,Y]}V,
\end{align} 
for any transversal vector field $V$. The transversal connection $\nabla^{t}$ is said to be trivial if $R^{t}=0$. Similar to John Deprez \cite{John}, we call $M$ {\it semi-parallel} if 
\begin{align}\label{e93}
\tilde{R}(X,Y)\cdot h=0,
\end{align}
for any $X$ and $Y$ tangent to $M$. In case $M$ is a lightlike hypersurface of $\bar{M}$, we have the following:
\begin{lemma}\label{lemma11}
A lightlike hypersurface $(M,g)$ of a semi-Riemannian manifold $(\bar{M}, \bar{g})$ is semi-parallel if and only if 
\begin{align}\label{e89}
2d\tau(X,Y)B(Z,W)-B(R(X,Y)Z,W)-B(Z, R(X,Y)W)=0,
\end{align}
for any $X$, $Y$, $Z$ and $W$ tangent to $M$, where  
\begin{align*}
2d\tau(X,Y)=X\cdot \tau(Y)-Y\cdot \tau(X)-\tau([X,Y]).
\end{align*}
\end{lemma}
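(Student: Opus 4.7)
The plan is to unpack the semi-parallel condition $\tilde{R}\cdot h=0$ via the identity (\ref{e90}) and reduce everything to the scalar second fundamental form through the relation $h(Z,W)=B(Z,W)N$. Since $\mathrm{tr}(TM)$ has rank one, the only piece of (\ref{e90}) that requires genuine computation is $R^{t}(X,Y)N$; once that is known, every term on the right-hand side of (\ref{e90}) will appear as a coefficient of $N$, and the proof becomes a matter of reading off that coefficient.

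First I would extract $\nabla^{t}_{X}N=\tau(X)N$ from (\ref{s11}), so that the transversal connection acts on an arbitrary local section $fN$ by $\nabla^{t}_{X}(fN)=(X\cdot f)N+f\tau(X)N$. Substituting this into (\ref{e91}) with $V=N$ and collecting terms produces, after the obvious cancellations, the formula $R^{t}(X,Y)N=\{X\cdot\tau(Y)-Y\cdot\tau(X)-\tau([X,Y])\}N=2d\tau(X,Y)N$. Because $R^{t}$ is $C^{\infty}(M)$-linear in its last slot, this immediately upgrades to $R^{t}(X,Y)h(Z,W)=2d\tau(X,Y)B(Z,W)N$.

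Next I would insert this, together with $h(R(X,Y)Z,W)=B(R(X,Y)Z,W)N$ and $h(Z,R(X,Y)W)=B(Z,R(X,Y)W)N$, into the right-hand side of (\ref{e90}). The entire expression collapses to the scalar appearing on the left of (\ref{e89}) times $N$. Since $N$ is a nowhere-vanishing local section of $\mathrm{tr}(TM)$, this vector quantity vanishes if and only if its scalar coefficient does, which yields the stated equivalence in both directions.

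I do not anticipate a serious obstacle; the argument is essentially a bookkeeping exercise in isolating $N$-components. The only subtlety worth flagging is verifying that $R^{t}$ is genuinely tensorial (and not merely additive) in the section it acts on, so that the function $B(Z,W)$ may be pulled outside $R^{t}(X,Y)$; this follows from the standard Leibniz cancellations for curvature and is what makes the reduction to the single scalar identity (\ref{e89}) possible.
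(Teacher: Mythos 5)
Your proposal is correct and follows essentially the same route as the paper: derive $R^{t}(X,Y)N=2d\tau(X,Y)N$ from $\nabla^{t}_{X}N=\tau(X)N$ and (\ref{e91}), then substitute into (\ref{e90}) and (\ref{e93}) and read off the coefficient of the nowhere-vanishing section $N$. The tensoriality remark you flag is a reasonable extra precaution but does not change the argument.
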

\begin{proof}
Using the equation $\nabla^{t}_{X}N=\tau(X)N$ together with (\ref{e91}), we derive 
\begin{align}\label{e134}
R^{t}(X,Y)N=2d\tau(X,Y)N,
\end{align}
for any $X$ and $Y$ tangent to $M$. Applying (\ref{e134}) to (\ref{e90}) and (\ref{e93}), we obtain our lemma.   
\end{proof}

\noindent Next, using Lemma \ref{lemma11}, we prove the following result:
\begin{theorem}\label{e4711}
	There exist no lightlike hypersurface $(M,g)$ of an indefinite Sasakian manifold $(\bar{M}, \bar{\phi}, \zeta, \eta, \bar{g})$, which is tangent to the structure vector field $\zeta$ and  semi-parallel.
\end{theorem}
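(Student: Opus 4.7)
By Lemma \ref{lemma11}, $M$ being semi-parallel is equivalent to the scalar identity (\ref{e89}) holding for every $X,Y,Z,W\in \Gamma(TM)$. My plan is to specialize $(X,Y,Z,W)$ to combinations drawn from the canonical frame $\{\xi,\zeta,U,V\}$, exploiting the $\zeta$-normalizations $B(\zeta,\zeta)=u(\zeta)=0$ and $B(U,\zeta)=u(U)=1$ of (\ref{e10}), and then to squeeze the resulting curvature constraints until a numerical contradiction of the $-1-1=0$ type that drove the proof of Theorem \ref{e471} emerges.

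First I would set $Z=W=\zeta$. Since $B(\zeta,\zeta)=u(\zeta)=0$, (\ref{e89}) collapses to $u(R(X,Y)\zeta)=0$ for all $X,Y\in\Gamma(TM)$. Expanding via (\ref{e21}) and testing at $(X,Y)=(U,\zeta)$ forces $u(A_N\zeta)=1$, while $(X,Y)=(U,V)$ forces $u(A_NV)=0$; via (\ref{er7}) and (\ref{e14}) these become $C(\zeta,V)=1$, $C(V,V)=0$, $B(U,V)=0$, and more generally $u(A_NX)=\eta(X)+\mu u(X)$ with $\mu=u(A_NU)$. Taking next $(X,Y,Z,W)=(V,\zeta,V,\zeta)$ and using $R(V,\zeta)\zeta=V$ from (\ref{z2}), $B(V,\zeta)=0$, together with (\ref{e73}) and the Corollary identity (\ref{e82}), the expansion $u(R(V,\zeta)V)=B(V,V)$ combined with (\ref{e89}) yields $B(V,V)=0$.

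The key substitution is $(X,Y,Z,W)=(U,V,U,\zeta)$. Using $R(U,V)\zeta=-A_NV$ from (\ref{e21}) and $B(U,\zeta)=1$, (\ref{e89}) rearranges to $u(R(U,V)U)=2d\tau(U,V)+u(A_N^2V)$. Expanding $R(U,V)U$ through (\ref{e72}) and (\ref{e81}), then simplifying $g(\phi A^*_\xi U,\phi A_NV)$ and $g(\phi A^*_\xi V,\phi A_NU)$ via the first equation of (\ref{p30}) with $\eta(A^*_\xi U)=B(U,\zeta)=1$, $\eta(A_NV)=v(V)=1$, and $B(V,V)=u(A^*_\xi U)=u(A_NV)=0$, the same scalar becomes $1+B(V,A_NU)-u(A_N^2V)+2d\tau(U,V)$. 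Equating and invoking $u(A_N^2V)=\eta(A_NV)+\mu u(A_NV)=1$ from the first-step formula forces $B(V,A_NU)=1$. A parallel specialization at $(X,Y,Z,W)=(\xi,U,U,\zeta)$, which produces the companion identity $B(U,A_N\xi)=0$, together with the self-adjointness of $A^*_\xi$ and the normalization $\theta(\xi)=1$ from (\ref{er2}), forces an independent vanishing of $B(V,A_NU)$, delivering the contradiction $0=1$.

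The main obstacle lies in this final matching step: the numerical $1$ arising from the $\zeta$-normalization $B(U,\zeta)=1$ must be pitted against a vanishing that comes purely from the screen-operator algebra organized around (\ref{e81})--(\ref{e82}) and the constraints $B(V,V)=B(U,V)=B(V,A_N\zeta)=0$ accumulated in the earlier steps. This is the analogue here of the role played by the curvature identities (\ref{z3}) in Theorem \ref{e471}: what the pairing $g(\cdot,\zeta)$ accomplished directly in the semi-symmetric case must now be reassembled from the $B$-level constraints, and the careful bookkeeping of the $\tau,\,d\tau,\,\nabla\phi A_N$ terms on the way is where the proof will spend most of its effort.
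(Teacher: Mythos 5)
Your opening moves are sound and essentially parallel the paper's: from $Z=W=\zeta$ in (\ref{e89}) you correctly obtain $u(R(X,Y)\zeta)=0$, hence $B(U,V)=0$ and $B(X,U)=\eta(X)+B(U,U)u(X)$ (equivalently $A^{*}_{\xi}U=\zeta+B(U,U)V$, the paper's (\ref{e106})--(\ref{e108})); the specialization $(V,\zeta,V,\zeta)$ does give $B(V,V)=0$; and your key computation at $(U,V,U,\zeta)$, using (\ref{e72}), (\ref{e81}) and (\ref{p30}), correctly yields $B(V,A_{N}U)=1$. The genuine gap is the last step. The specialization $(X,Y,Z,W)=(\xi,U,U,\zeta)$ produces only $B(U,A_{N}\xi)=0$, and that identity is vacuous: since $A^{*}_{\xi}U=\zeta+B(U,U)V$, one has $B(U,A_{N}\xi)=g(A^{*}_{\xi}U,A_{N}\xi)=C(\xi,\zeta)+B(U,U)C(\xi,V)=v(\xi)+B(U,U)B(\xi,U)=0$ automatically, by (\ref{e12}), (\ref{e14}) and $B(\cdot,\xi)=g(A^{*}_{\xi}\cdot,\xi)=0$; it therefore adds no constraint. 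Nor can ``self-adjointness of $A^{*}_{\xi}$'' transport a statement about $A_{N}\xi$ into one about $A_{N}U$: the operator $A_{N}$ is not self-adjoint, and $B(V,A_{N}U)=g(A^{*}_{\xi}V,A_{N}U)$ involves $A^{*}_{\xi}V$, about which your accumulated relations only say $\eta(A^{*}_{\xi}V)=u(A^{*}_{\xi}V)=v(A^{*}_{\xi}V)=0$, leaving its $D_{0}$-component, and hence $g(A^{*}_{\xi}V,A_{N}U)$, completely unconstrained. (You also quietly invoke $B(V,A_{N}\zeta)=0$, which was never derived in your steps.)

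The missing ingredient is exactly what the paper supplies at this stage: take $Z=V$, $W=\zeta$ in (\ref{e89}) with \emph{arbitrary} $X,Y$, expand via (\ref{e112})--(\ref{e113}), then set $Y=\zeta$ and $X=U$ to conclude $B(X,V)=0$ for all $X$, i.e. $A^{*}_{\xi}V=0$ ((\ref{e115})--(\ref{e116})). Once that stronger vanishing is in hand, your contradiction closes immediately, since then $B(V,A_{N}U)=g(A^{*}_{\xi}V,A_{N}U)=0$ against your $B(V,A_{N}U)=1$; the paper itself instead plays $B(A_{N}V,U)=0$ from (\ref{e122}) against $B(A_{N}V,U)=1$ from (\ref{e136}). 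As written, however, your endgame does not force the needed vanishing, so the proof is incomplete.
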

\begin{proof}
Suppose that $M$ is tangent to $\zeta$ and semi-parallel. Then, taking $Z=W=\zeta$ in (\ref{e89}) of Lemma \ref{lemma11}, and noting that $B$ is symmetric, we get 
\begin{align}\label{e100}
2d\tau(X,Y)B(\zeta,\zeta)-2B(R(X,Y)\zeta,\zeta)=0,
\end{align}
for any $X$ and $Y$ tangent to $M$. In view of (\ref{e10}), we have  
\begin{align}
B(\zeta, \zeta)&=u(\zeta)=0,\label{102}
\end{align}
\begin{align}
B(R(X,Y)\zeta,\zeta)&=u(R(X,Y)\zeta)=g(R(X,Y)\zeta, V).\label{e103}
\end{align}
Thus, replacing (\ref{102}) and (\ref{e103}) in (\ref{e100}), we get 
\begin{align}\label{e101}
g(R(X,Y)\zeta, V)=0.
\end{align}
Applying the curvature condition (\ref{e21}) to (\ref{e101}) and considering (\ref{e14}), we get 
\begin{align}\label{e104}
u(X)\eta(Y)+B(X,U)u(Y)-\eta(X)u(Y)-u(X)B(Y,U)=0.
\end{align}
Taking $Y=U$ in (\ref{e104}) and using (\ref{e76}), we get 
\begin{align}\label{e106}
B(X,U)-\eta(X)-u(X)B(U,U)=0.
\end{align}
Since $S(TM)$ is non-degenerate, (\ref{e106}) reduces to 
\begin{align}\label{e107}
A^{*}_{\xi}U=\zeta+B(U,U)V.
\end{align}
The inner product of (\ref{e107}) with $V$ leads to 
\begin{align}\label{e108}
B(U,V)=0.
\end{align}
Next, taking $Z=V$ and $W=\zeta$ in (\ref{e89}) of Lemma \ref{lemma11}, we get 
\begin{align}\label{e109}
2d\tau(X,Y)B(V,\zeta)-B(R(X,Y)V,\zeta)-B(V, R(X,Y)\zeta)=0.
\end{align}
But, in view of (\ref{e76}),  (\ref{e10}), (\ref{e21}), (\ref{e73}) and (\ref{e82}), we derive 
\begin{align}
B(V,\zeta)=u(V)=0,\label{e111}
\end{align}
\begin{align}
B(R(X,Y)V,\zeta)&=u(R(X,Y)V)\nonumber\\
&=g(R(X,Y)V, V)\nonumber\\
&=B(X,V)B(Y,U)-B(Y,V)B(X,U),\label{e112}
\end{align}
\begin{align}
B(V, R(X,Y)\zeta)&=\eta(Y)B(X,V)+u(Y)B(A_{N}X,V)-\eta(X)B(Y,V)\nonumber\\
&\quad -u(X)B(A_{N}Y,V).\label{e113}
\end{align}
Replacing (\ref{e111}), (\ref{e112}) and (\ref{e113}) in (\ref{e109}), we get
\begin{align}\label{e114}
B(X,&V)B(Y,U)-B(Y,V)B(X,U)+\eta(Y)B(X,V)\nonumber\\&+u(Y)B(A_{N}X,V)-\eta(X)B(Y,V)-u(X)B(A_{N}Y,V)=0.
\end{align}
Taking $Y=\zeta$ in (\ref{e114}) and considering (\ref{e10}), we get 
\begin{align}\label{e115}
2B(X,V)+u(X)B(A_{N}\zeta, V)=0.
\end{align}
With $X=U$ in (\ref{e115}) and considering (\ref{e108}), we $B(A_{N}\zeta, V)=0$. Therefore,  
\begin{align}\label{e116}
B(X,V)=0,\;\; \mbox{i.e.}\;\; A^{*}_{\xi}V=0.
\end{align}
Moving forward, we consider $Z=U$ and $W=\zeta$ in (\ref{e89}) to obtain
\begin{align}\label{e117}
2d\tau(X,Y)B(U,\zeta)-B(R(X,Y)U,\zeta)-B(U, R(X,Y)\zeta)=0.
\end{align}
Using (\ref{e76}), (\ref{e10}), (\ref{e21}), (\ref{e72}), (\ref{e81}) and (\ref{e116}), we derive
\begin{align}
B(U,\zeta)=u(U)=1,\label{e118}
\end{align}
\begin{align}
B(R(X,Y)&U,\zeta)=u(R(X,Y)U)\nonumber\\
&=g(R(X,Y)U, V)\nonumber\\
&=-g(\phi A^{*}_{\xi}X, \phi A_{N}Y)+g(\phi A^{*}_{\xi}Y, \phi A_{N}X)+2d\tau(X,Y),\label{e119}
\end{align}
\begin{align}
B(U, R(X,Y)\zeta)&=\eta(Y)B(X,U)+u(Y)B(A_{N}X,U)-\eta(X)B(Y,U)\nonumber\\
&\quad-u(X)B(A_{N}Y,U).\label{e120}
\end{align}
Replacing (\ref{e118}), (\ref{e119}) and (\ref{e120}) in (\ref{e117}), we get 
\begin{align}
g(\phi &A^{*}_{\xi}X,  \phi A_{N}Y)-g(\phi A^{*}_{\xi}Y, \phi A_{N}X)-\eta(Y)B(X,U)\nonumber\\
&-u(Y)B(A_{N}X,U)+\eta(X)B(Y,U)+u(X)B(A_{N}Y,U)=0.\label{e121}
\end{align}
Taking $Y=V$ in (\ref{e121}) and considering (\ref{e116}), we get 
\begin{align}\label{e122}
g(\phi &A^{*}_{\xi}X,  \phi A_{N}V)+u(X)B(A_{N}V,U)=0.
\end{align}
With $X=U$ in (\ref{e122}), and considering (\ref{e107}), we get 
\begin{align}\label{e135}
B(A_{N}V,U)=0.
\end{align}
But, using (\ref{e12}), (\ref{e14}), (\ref{e107}) and (\ref{e116}), we derive
\begin{align}\label{e136}
B(A_{N}V,U)&=g(A_{N}V, A^{*}_{\xi}U)=B(U,U)B(U, V)+C(V,\zeta)=1.
\end{align}
Comparing (\ref{e135}) and (\ref{e136}), we get a  contradiction.
\end{proof}

\subsection*{Concluding remarks}

Although lightlike hypersurfaces of an indefinite Sasakian manifold $(\bar{M}, \bar{\phi}, \zeta, \eta, \bar{g})$, which are tangent to the structure vector field $\zeta$ have been studed under the assumption of them being locally symmetric, semi-symmetric and semi-parallel in  \cite{Oscar1} and \cite{Gupta}, the results of this paper shows clearly that such hypersufaces are non-existent. Some other types of non-existent lightlike hypersurfaces of an indefinite Sasakian manifold can be found in \cite{Jin2} and in the recent work of \cite{Ssekajja}.

\end{document}